\theoremstyle{plain}
\newtheorem{Thm}{Theorem}
\newtheorem{Lem}[Thm]{Lemma}
\renewcommand{\theMain}{}
\newcommand{\rad}{\operatorname{rad}}
\newcommand{\ep}{\epsilon}
\newcommand{\bt}{\bigtriangleup}
\newcommand{\btd}{\bigtriangledown}
\newcommand{\Om}{\Omega}
\begin{document}
%begin Topmatter
%begin Topmatter
\title[Symmetry Results for Monge-Ampere System in the plane]
{Symmetry Results for classical solutions of Monge-Ampere system in
the plane}

\author{Li Ma, Baiyu Liu }

\address{LM: Department of mathematical sciences \\
Tsinghua university \\
Beijing 100084 \\
China} \email{lma@math.tsinghua.edu.cn}

\address{BL: Department of mathematical sciences \\
Tsinghua university \\
Beijing 100084 \\
China} \email{liuby05@mails.tsinghua.edu.cn}

\dedicatory{Dedicated to Louis Nirenberg on the occasion of his 85th
birthday}

\thanks{$^*$ The research is partially supported by the National Natural Science
Foundation of China 10631020 and SRFDP 20060003002. }

\maketitle

\begin{abstract}
In this paper, by the method of moving planes, we prove the symmetry
result which says that classical solutions of Monge-Ampere system in
the whole plane are symmetric about some point. Our system under
consideration comes from the differential geometry problem.

\emph{Keyword: Moving plane, positive solutions, radial symmetric,
Monge-Ampere system}

{\em Mathematics Subject Classification: 35J60, 53C21, 58J05}
\end{abstract}

%end topmatter
\section{Introduction} It is an interesting question to consider the
relation between two convex surfaces in space. There are at least
two famous theorems for convex surfaces in 3-space. One is the
Cohn-Vossen theorem, which says that if two closed convex surfaces
differ by an isometry, then they are the same by a translation. The
other is the Minkowski theorem, which says that if two closed convex
surfaces share the same Gauss curvature, then they are differ only
by a translation. For a simple and beautiful proof of the latter,
one may see the paper of S.S.Chern \cite{chern}.
 We study the relation between two non-compact convex
 surfaces in space, which leads us to
consider a symmetry result for the Monge-Ampere system in the
plane $\mathbf{R}^2$. Assume that we are given two functions
$K_1(x,u,v)$ and $K_2(x,u,v)$ for $(x,u,v)\in \mathbf{R}^2\times
\mathbf{R}^2$. We are looking for a pair of function $u=u(x)$ and
$v=v(x)$ with their graphs $\Gamma(x,u(x))$ and $\Gamma(x,v(x))$
such that $K_1(x,u(x),v(x))$ and $K_2(x,u(x),v(x))$ are Gaussian
curvatures of the graphs $\Gamma(x,u(x))$ and $\Gamma(x,v(x))$
respectively. Then we are lead to solve the Monge-Ampere system
$$
K_1(x,u,v)=\frac{det D^2u(x)}{(1+|Du(x)|^2)^2}, \quad in \quad
\mathbf{R}^2
$$
and
$$
K_2(x,u,v)=\frac{det D^2v(x)}{(1+|Dv(x)|^2)^2}, \quad in \quad
\mathbf{R}^2.
$$
Single Monge-Ampere equation (with Dirichlet boundary condition) has
been studied by many authors, one may see \cite{CafL} and \cite{TU}
for more references.

We study here a little more generalized version of the Monege-Ampere
system above. We study the symmetry result for classical solutions
of the Monege-Ampere system:
\begin{eqnarray}
  \label{eq:sys}
  \left\{
\begin{array}{l@{\quad \quad}l}
det(D^2u)+g(u,v,\nabla u)=0, & in \ \mathbf{R}^{2},\\
det(D^2v)+f(u,v,\nabla v)=0, & in \ \mathbf{R}^{2},\\
\ (D^2u)>0,\ (D^{2}v)>0, & in \ \mathbf{R}^{2},\\
\lim_{\{|x|\to \infty;x_1<\lambda\}}(u(x)-u(x^{\lambda}))\geq 0, & for \ \lambda <0,\\
\lim_{\{|x|\to \infty; x_1<\lambda\}}(v(x)-v(x^{\lambda}))\geq 0, & for \ \lambda <0,\\
\lim_{\{|x|\to \infty;x_1>\lambda\}}(u(x)-u(x^{\lambda}))\geq 0, & for \ \lambda >0,\\
\lim_{\{|x|\to \infty; x_1>\lambda\}}(v(x)-v(x^{\lambda}))\geq 0, & for \ \lambda >0,\\
\lim_{|x|\to \infty}u(x)=\lim_{|x|\to \infty} v(x)=+\infty,\\
\lim_{|x|\to \infty}|\nabla u(x)|=\lim_{|x|\to \infty}|\nabla
v(x)|=\infty,
\end{array}\right.
\end{eqnarray}
for $x^{\lambda}=(2\lambda-x_1,x_2)$ is the reflection of the
point $x$ with respect to any line $T_{\lambda}=\{x_1=\lambda\}$,
where
$f,g \in C^{1}(\mathbf{R}^2\times \mathbf{R}^{2}, \mathbf{R})$.\\
We suppose that:
\begin{eqnarray*}
(i) &   & f\  and\ g\ are\ symmetric in\ p_1: \forall (u,v)\in \mathbf{R}^2, \forall p_1, p_2 \in \mathbf{R},\\
&   & g(u, v, p_{1}, p_2)=g(u, v,-p_{1}, p_2),  f(u, v, p_{1}, p_2)=f(u, v,-p_{1}, p_2);\\
(ii) &   & C_1>\frac{\partial g}{\partial v}(u,v,p_1,p_2)>0, \quad
C_2>\frac{\partial f}{\partial
  u}(u,v,p_1,p_2)>0,\\
&   & \qquad \qquad \qquad \forall (u,v)\in \mathbf{R}^2,
\  \forall p_1, p_2 \in \mathbf{R};\\
(iii)
&   & \lim_{|u|^{-1}+|v|^{-1}+|(p_1, p_2)|^{-1}\to 0 }\frac{\partial g}{\partial u}(u,v,p_1,p_2)<0,\\
&   &
\lim_{|u|^{-1}+|v|^{-1}+|(p_1, p_2)|^{-1}\to 0 }\frac{\partial f}{\partial
  v}(u,v,p_1,p_2)<0;\\
(iv) &   & \lim_{|u|^{-1}+|v|^{-1}+|(p_1, p_2)|^{-1}\to 0 } det \left(
\begin{array}{l@{\quad \quad}l}
 \frac{\partial g}{\partial u}& \frac{\partial g}{\partial v}\\
 \frac{\partial f}{\partial u}& \frac{\partial f}{\partial v}
\end{array}\right)(u,v,p_1,p_2)>0,
\end{eqnarray*}
where $C_i$ are positive constants.

Our result is the following.
\begin{Thm}\label{thm:sym}
Let $(u, v)$ be a classical solution of system (\ref{eq:sys}).
Suppose (i)-(iv) hold, then the solution $(u, v )$ is symmetric in
$x_1$ direction, i.e.,  there is some $\lambda$ such that $u(x_1,
x_2)= u(2\lambda-x_1, x_2)$ and $v(x_1, x_2)= v(2\lambda-x_1,
x_2)$.
\end{Thm}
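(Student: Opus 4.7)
The plan is to apply the method of moving planes jointly to the pair $(u,v)$. Fix $\lambda\in\mathbf{R}$, let $\Sigma_\lambda=\{x\in\mathbf{R}^2:x_1<\lambda\}$, and set
\[u_\lambda(x)=u(x^\lambda),\quad v_\lambda(x)=v(x^\lambda),\quad w_\lambda=u-u_\lambda,\quad z_\lambda=v-v_\lambda.\]
Hypothesis~(i), the symmetry of $f,g$ in $p_1$, combined with the identity $\det D^2u_\lambda(x)=\det D^2u(x^\lambda)$, shows that $(u_\lambda,v_\lambda)$ also solves the system on $\Sigma_\lambda$. The objective is to produce a critical $\lambda_0$ at which $w_{\lambda_0}\equiv z_{\lambda_0}\equiv 0$, which is exactly the claimed symmetry.

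I would then subtract the Monge-Ampere equations and use the matrix identity
\[
\det D^2u-\det D^2u_\lambda=\operatorname{tr}\bigl(A_\lambda\,D^2w_\lambda\bigr),\qquad A_\lambda=\int_0^1\operatorname{cof}\bigl(D^2u_\lambda+t\,D^2w_\lambda\bigr)\,dt,
\]
together with the mean value theorem applied to $g$ and $f$. This yields the coupled linear system
\begin{eqnarray*}
\operatorname{tr}(A_\lambda D^2w_\lambda)+\bar g_p\cdot\nabla w_\lambda+\bar g_u\,w_\lambda+\bar g_v\,z_\lambda & = & 0,\\
\operatorname{tr}(B_\lambda D^2z_\lambda)+\bar f_p\cdot\nabla z_\lambda+\bar f_u\,w_\lambda+\bar f_v\,z_\lambda & = & 0,
\end{eqnarray*}
in $\Sigma_\lambda$, where the barred coefficients are $t$-averages of the partials of $g,f$ along the segments interpolating between $(u_\lambda,v_\lambda,\nabla u_\lambda)$ and $(u,v,\nabla u)$ (and similarly for the $v$-equation). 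Since both $D^2u$ and $D^2u_\lambda$ are positive definite, the cofactor averages $A_\lambda,B_\lambda$ are positive definite and both equations are elliptic; hypothesis~(ii) gives $\bar g_v,\bar f_u>0$, so the off-diagonal coupling is non-negative and the system is cooperative.

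I would start the procedure from $\lambda$ very negative. The tail hypotheses supply $\liminf_{|x|\to\infty}w_\lambda\ge 0$ and $\liminf_{|x|\to\infty}z_\lambda\ge 0$ on the infinite part of $\partial\Sigma_\lambda$, while $w_\lambda$ and $z_\lambda$ vanish on the line $T_\lambda$. A maximum-principle argument for cooperative systems then yields $w_\lambda,z_\lambda\ge 0$ in $\Sigma_\lambda$: the point is that~(iii) makes $\bar g_u,\bar f_v$ negative in the far region, and combined with $\bar g_v,\bar f_u>0$ and the positivity of the Jacobian determinant in~(iv), this produces a positive vector supersolution outside a large ball, after which a contradiction argument at an interior minimum of $(w_\lambda,z_\lambda)$ in that ball rules out negativity. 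Setting
\[\lambda_0=\sup\{\mu\le 0:w_\lambda,z_\lambda\ge 0\text{ in }\Sigma_\lambda\text{ for all }\lambda\le\mu\},\]
continuity gives $w_{\lambda_0},z_{\lambda_0}\ge 0$ in $\Sigma_{\lambda_0}$. Either these are identically zero, delivering the theorem, or the strong maximum principle for the cooperative system forces $w_{\lambda_0},z_{\lambda_0}>0$ strictly, in which case a narrow-strip refinement near $T_{\lambda_0}$ permits $\lambda$ to be pushed past $\lambda_0$, contradicting its definition. Running the symmetric procedure from $\lambda=+\infty$ downward with the other tail hypotheses then fixes the axis of symmetry.

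The main obstacle is that the linearised problem is not uniformly elliptic, since $A_\lambda,B_\lambda$ depend on $D^2u,D^2u_\lambda$ and may degenerate or blow up where $|\nabla u|,|\nabla v|\to\infty$; moreover the potential coefficients $\bar g_u,\bar f_v$ are sign-controlled only asymptotically, so the cooperative maximum principle must be applied on an unbounded strip with possibly deteriorating coefficients. Hypotheses~(iii)--(iv) are tailor-made for this situation: (iii) pins down the favourable signs of $\bar g_u,\bar f_v$ far from the origin, while the positive Jacobian determinant in~(iv), together with $\bar g_v,\bar f_u>0$ from~(ii), produces an $M$-matrix structure for the potential part and hence a scalar majorant supporting the single-function maximum principle needed to close the cooperative argument on $\Sigma_\lambda$.
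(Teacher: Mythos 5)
Your overall framework coincides with the paper's: linearize the difference of the two Monge--Amp\`ere equations via the cofactor integral identity, observe that (ii) makes the resulting system cooperative, and use (iii)--(iv) to control the zeroth-order matrix at infinity. Up to the sign convention ($w_\lambda=-U_\lambda$), the linear system you write is exactly (\ref{eq:bulam})--(\ref{eq:bvlam}). Two of your steps, however, diverge from the paper, and one of them contains a genuine gap. For the starting position you propose ``a positive vector supersolution outside a large ball''; the paper instead evaluates the two equations at interior maximum points $y_1$ of $U_\lambda$ and $y_2$ of $V_\lambda$, chains the resulting inequalities into $J_{11}J_{22}-J_{12}J_{21}\le 0$, and contradicts Lemma \ref{lem:nneg}. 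Your supersolution route is plausible but unconstructed, and it silently skips the point that the mean-value coefficients are evaluated at \emph{intermediate} arguments between $(u,v,\nabla u)$ and $(u_\lambda,v_\lambda,\nabla u_\lambda)$; transferring the asymptotic conditions (iii)--(iv) to those intermediate arguments is precisely the content of the paper's Lemmas \ref{lem:neg} and \ref{lem:nneg}, which you would need to reproduce.

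The real gap is at the critical position $\lambda_0$. You invoke a ``narrow-strip refinement near $T_{\lambda_0}$'' to push the plane past $\lambda_0$. The narrow-domain maximum principle requires quantitative control of the operator on the strip: a uniform ellipticity ratio for $A_\lambda,B_\lambda$ and bounds on the first-order coefficients. Here the strip is unbounded, the matrices $A_\lambda,B_\lambda$ are only pointwise positive definite (they degenerate or blow up with $D^2u$, $D^2u_\lambda$), and the drift terms involve $\nabla u,\nabla v$, which tend to infinity by hypothesis. You correctly identify this as ``the main obstacle'' but do not resolve it, and hypotheses (iii)--(iv) alone do not: they control only the zeroth-order part, not the ellipticity or the drift. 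The paper circumvents the narrow strip entirely: it takes $\lambda_k\downarrow\lambda_0$ together with maximum points $y_k$ where positivity fails, and splits into two cases. If $\{y_k\}$ has a bounded subsequence, one works on the \emph{fixed compact} set $\Sigma_{\lambda_0}\cap B_{R_3}(0)$, where pointwise ellipticity suffices for the strong maximum principle and Hopf's lemma, and the contradiction comes from $\nabla U_{\lambda_0}(y_0)=0$ at a boundary point. If $|y_k|\to\infty$, the Step-1 determinant argument applies verbatim via Lemma \ref{lem:nneg}. That dichotomy is the idea your proposal is missing, and without it the continuation past $\lambda_0$ does not close.
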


Notation: We shall use $(\overline{A,B})$ to denote the open
interval in the line from $A$ to $B$.

The method to prove Theorem \ref{thm:sym} is motivated by
\cite{Bu00}, \cite{Cli2}, \cite{ML09}, and \cite{Ma}. We use the
method of moving planes, which has been used in \cite{gnn},
\cite{BN},\cite{ChenL}, and \cite{Cli1}.

\section{Monge-Ampere System in $\mathbf{R}^{2}$}

We prove our theorem in this section.

In what follows, we shall use the method of moving plane. To
proceed, we start by considering lines parallel to $x_1=0$, coming
from $-\infty$. For each $\lambda \in \mathbf{R}$, we define
$$
\Sigma_\lambda := \{x\in \mathbf{R}^2\ | \ x_1 <\lambda\},\quad
T_\lambda :=
\partial \Sigma_\lambda = \{x\in \mathbf{R}^2 \ |\ x_1=\lambda\}.
$$
For any point $x=(x_1, x_2) \in \Sigma_\lambda $, let
$x^\lambda=(2\lambda-x^1, x_2)$ be the reflected point with respect
to the line $T_\lambda$. We define the reflected functions by
$$
u_\lambda(x):= u(x^\lambda), \quad v_\lambda(x):= v(x^\lambda),
$$
and introduce the functions
$$
U_\lambda(x):= u_\lambda(x)-u(x), \quad V_\lambda(x):=
v_\lambda(x)-v(x).
$$

By the definition of $u_{\lambda}$,
$det(D^2u_{\lambda})(x)=det(D^2u)(x^{\lambda})$. By using the
integral form of the theorem of the mean, we obtain:
\begin{equation}
det(D^2 u_\lambda)-det(D^2 u)=a_{ij}U_{\lambda, ij},
\end{equation}
where
\begin{equation}
\label{eq:aij} (a_{ij}(x))=\frac{1}{2}(det(D^2 u_\lambda)(D^2
u_\lambda)^{-1}+det(D^2 u)(D^2 u)^{-1}).
\end{equation}
Noticing that the solution $(u, v)$ are convex everywhere, we have
\begin{equation}
\label{eq:posaij} (a_{ij}(x))>0, \quad x \in \Sigma_{\lambda}.
\end{equation}

On the other hand, by using (i), we have
\begin{eqnarray}
  \label{eq:ulam}
  det(D^2u_{\lambda})(x)& = & -g(u, v, \nabla u)(x^{\lambda})\\
\nonumber & = & -g(u_{\lambda}, v_{\lambda}, -u_{\lambda,1},
  u_{\lambda,2})\\
\nonumber & = & -g(u_{\lambda}, v_{\lambda}, \nabla u_{\lambda}).
\end{eqnarray}
Similarly we have the elliptic equation for $V_{\lambda}$ with the
elliptic coefficient matrix $(b_{ij})=(b_{ij}(x))$.

Therefore, we find that $(U_{\lambda}, V_{\lambda})$ satisfies an
elliptic linear system: in $\Sigma_{\lambda}$
\begin{eqnarray}
  \label{eq:bulam} \\\nonumber
  a_{ij}U_{\lambda,ij}+\frac{\partial g}{\partial p_1}(u_{\lambda}, v_{\lambda}, \theta_1(x,\lambda) ,
  u_{\lambda,2})U_{\lambda,1}
  +\frac{\partial g}{\partial p_2}(u_{\lambda}, v_{\lambda},
  u_{\lambda,1}, \theta_{2}(x,\lambda))U_{\lambda,2} &   &\\
\nonumber  +\frac{\partial g}{\partial u}(\xi_1(x,\lambda), v, \nabla u )U_{\lambda}+\frac{\partial
  g}{\partial v}(u_{\lambda}, \eta_{1}(x, \lambda), \nabla u)V_{\lambda}=0,
\end{eqnarray}
\begin{eqnarray}
  \label{eq:bvlam}\\\nonumber
  b_{ij}V_{\lambda,ij}+\frac{\partial f}{\partial p_1}(u_{\lambda}, v_{\lambda}, \tau_1(x,\lambda) ,
  v_{\lambda,2})V_{\lambda,1}+\frac{\partial f}{\partial p_2}(u_{\lambda}, v_{\lambda},
  v_{\lambda,1}, \tau_{2}(x,\lambda))V_{\lambda,2} &   &\\
\nonumber  +\frac{\partial f}{\partial u}(\xi_2(x,\lambda),
v_\lambda, \nabla v )U_{\lambda}+\frac{\partial
  f}{\partial v}(u, \eta_{2}(x, \lambda), \nabla v)V_{\lambda}=0,
\end{eqnarray}
where for $i=1,2,$
\begin{eqnarray}\label{eq:xieta}
\xi_{i}(x, \lambda) & \in (\overline{u(x),
u_{\lambda}(x)}, \quad
\eta_{i}(x, \lambda) & \in  (\overline {v(x), v_{\lambda}(x)}),
\end{eqnarray}
\begin{equation}
\label{eq:theta} \theta_{i}(x, \lambda)\in (\overline {u_{i}(x),
u_{\lambda,i}(x)}),
\end{equation}
\begin{equation}
  \label{eq:tau}
  \tau_i(x, \lambda)\in (\overline{v_{i}(x), v_{\lambda,i}(x)}).
\end{equation}

We define
$$
\Sigma_\lambda ^{U^-} \coloneqq \{x\in \Sigma_\lambda \ |\ U_\lambda
(x)>0\}, \quad \Sigma_\lambda ^{V^-} \coloneqq \{x\in \Sigma_\lambda
\ |\ V_\lambda (x)>0\}.
$$
For simplicity, we first give two lemmata.

\begin{Lem}
\label{lem:neg}
Assume (i)-(iv) hold. Then there exists a constant
$R_1>0$, such that
$$
    \forall x\in \{\mathbf{R}^2\backslash B_{R_1}(0)\}\cap \Sigma_\lambda
    ^{U^-};\quad
    \forall y\in \{\mathbf{R}^2\backslash B_{R_1}(0)\}\cap \Sigma_\lambda
    ^{V^-}
$$
the following hold
$$
\frac{\partial g}{\partial u}(z, v(x),\nabla u(x))<0, \quad
\frac{\partial f}{\partial v}(u(y), w,\nabla v(y))<0,
$$
where $z\in (\overline{u_\lambda(x), u(x)})$ and $w\in
(\overline{v_\lambda(y), v(y)})$ arbitrarily.
\end{Lem}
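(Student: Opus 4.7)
The plan is to read off the conclusion directly from hypothesis (iii) together with the far-field behaviour imposed by the last two lines of system (\ref{eq:sys}). First, I will unpack (iii): since
$$
\lim_{|u|^{-1}+|v|^{-1}+|(p_1,p_2)|^{-1}\to 0}\frac{\partial g}{\partial u}(u,v,p_1,p_2) < 0,
$$
there exist constants $M>0$ and $\alpha>0$ such that
$$
\frac{\partial g}{\partial u}(u,v,p_1,p_2) < -\alpha \quad \text{whenever } |u|^{-1}+|v|^{-1}+|(p_1,p_2)|^{-1} < 1/M,
$$
and symmetrically for $\partial f/\partial v$. In other words, once each of $|u|$, $|v|$ and $|(p_1,p_2)|$ is sufficiently large, the two partial derivatives are uniformly negative.

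Next, I will use the boundary conditions $\lim_{|x|\to\infty}u(x)=\lim_{|x|\to\infty}v(x)=+\infty$ and $\lim_{|x|\to\infty}|\nabla u(x)|=\lim_{|x|\to\infty}|\nabla v(x)|=\infty$ from (\ref{eq:sys}) to pick $R_1>0$ so large that for every $x$ with $|x|>R_1$ one has simultaneously
$$
u(x) > 3M,\qquad v(x) > 3M,\qquad |\nabla u(x)|>3M,\qquad |\nabla v(x)|>3M.
$$
Then for $x\in\{\mathbf{R}^2\setminus B_{R_1}(0)\}\cap \Sigma_\lambda^{U^-}$ we have $U_\lambda(x)>0$, i.e. $u_\lambda(x)>u(x)>3M$, so any $z\in(\overline{u_\lambda(x),u(x)})$ satisfies $z>3M$. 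Combined with $v(x)>3M$ and $|\nabla u(x)|>3M$, this gives
$$
|z|^{-1}+|v(x)|^{-1}+|\nabla u(x)|^{-1}<\frac{1}{M},
$$
and hence $\partial g/\partial u(z,v(x),\nabla u(x)) < -\alpha <0$ by the first step.

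The second inequality is entirely analogous: for $y\in\{\mathbf{R}^2\setminus B_{R_1}(0)\}\cap \Sigma_\lambda^{V^-}$ the value $w\in(\overline{v_\lambda(y),v(y)})$ exceeds $3M$ since $V_\lambda(y)>0$ and $v(y)>3M$, and together with $u(y)>3M$, $|\nabla v(y)|>3M$ this places the argument in the range where (iii) forces $\partial f/\partial v<0$.

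The lemma is essentially a bookkeeping step, so I do not expect any real obstacle; the only point that requires care is verifying that the intermediate value $z$ (respectively $w$) is indeed large on $\Sigma_\lambda^{U^-}$ (respectively $\Sigma_\lambda^{V^-}$), which is why the definitions of $\Sigma_\lambda^{U^-}$ and $\Sigma_\lambda^{V^-}$ (as sets where $U_\lambda,V_\lambda>0$) are used in tandem with $u,v\to+\infty$ at infinity. Hypotheses (ii) and (iv) play no role in this particular lemma; they will enter later when one turns the sign information provided here into a maximum-principle argument on the linear system (\ref{eq:bulam})--(\ref{eq:bvlam}).
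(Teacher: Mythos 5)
Your proposal is correct and follows essentially the same route as the paper's proof: both extract a uniform neighbourhood of infinity from hypothesis (iii), use $u,v,|\nabla u|,|\nabla v|\to\infty$ to choose $R_1$, and then observe that on $\Sigma_\lambda^{U^-}$ the intermediate value $z$ exceeds $u(x)$ (and similarly $w$ exceeds $v(y)$), so the arguments of $\partial g/\partial u$ and $\partial f/\partial v$ lie in that neighbourhood. The explicit $3M$/$\alpha$ bookkeeping is just a quantitative rendering of the paper's $\epsilon$ argument.
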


\begin{proof}
Using (iii), we choose $\ep>0$ such that $\frac{\partial g}{\partial
u}(u,v,p_1,p_2)<0$, if $|u|^{-1}+|v|^{-1}+|(p_1, p_2)|^{-1}<\ep$. For
this particular $\ep$, since $\lim_{|x|\to
+\infty}(|u(x)|^{-1}+|\nabla u|^{-1})=0$ and $\lim_{|x|\to
+\infty}|v(x)|^{-1}=0$, there exists a constant $R_1>0$, such that
$|u(x)|^{-1}+|v(x)|^{-1}+|\nabla u|^{-1}(x)<\ep$, if $|x|>R_1$. For all
$x\in \{\mathbf{R}^2\backslash B_{R_1}(0)\}\cap \Sigma_\lambda
^{U^-}$, $|u(x)|^{-1}+|v(x)|^{-1}+|\nabla u|^{-1}(x)<\ep$.
Notice that in this case $z\in (\overline{u_\lambda(x) ,u(x)})=(u(x), u_\lambda(x))$. Thus,
$|z|^{-1}+|v(x)|^{-1}+|\nabla u|^{-1}(x)<\ep$, $\forall z\in
(\overline{u_\lambda (x), u(x)})$. Therefore, $\frac{\partial
g}{\partial u}(z, v(x),\nabla u(x))<0$, $\forall z\in
(\overline{u_\lambda (x), u(x)})$. Similarly, when $R_1$ is
sufficiently large, $\frac{\partial f}{\partial v}(u(y), w,\nabla
v(y))<0$, if $ y\in \{\mathbf{R}^2\backslash B_{R_1}(0)\}\cap
\Sigma_\lambda^{V^-}$.
\end{proof}

\begin{Lem}\label{lem:nneg}Assume (i)-(iv) hold. Then
there exists a constant $R_2>0$, such that
$$
\forall \  y_1, y_2 \in \{\mathbf{R}^2 \backslash B_{R_2}(0)\}\cap
\Sigma_\lambda ^{U^-}\cap \Sigma_\lambda ^{V^-},
$$
the following holds:
\begin{eqnarray}\label{eq:det} &   &
\frac{\partial g}{\partial u}(z_1, v(y_1), \nabla
u(y_1))\frac{\partial f}{\partial v}(u(y_2),w_2, \nabla v(y_2)) \\
&   &  \nonumber \quad -\frac{\partial g}{\partial
v}(u_\lambda(y_1),w_1, \nabla u(y_1))\frac{\partial f}{\partial
u}(z_2, v_\lambda (y_2), \nabla v(y_2)) >  0,
\end{eqnarray}
where $z_i \in (\overline{u_\lambda(y_i), u(y_i)})$, $w_i \in
(\overline{v_\lambda(y_i),
v(y_i)})$ $(i=1, 2)$ arbitrarily. \\
\end{Lem}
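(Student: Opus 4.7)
The plan is to run the argument in parallel with the proof of Lemma~\ref{lem:neg}, with hypothesis~(iv) taking the place of~(iii). By~(iv) one first fixes $\ep>0$ and $\delta>0$ such that
\begin{equation*}
\frac{\partial g}{\partial u}(\tilde u,\tilde v,\tilde p)\,\frac{\partial f}{\partial v}(\tilde u,\tilde v,\tilde p)-\frac{\partial g}{\partial v}(\tilde u,\tilde v,\tilde p)\,\frac{\partial f}{\partial u}(\tilde u,\tilde v,\tilde p)>\delta
\end{equation*}
whenever $|\tilde u|^{-1}+|\tilde v|^{-1}+|(\tilde p_1,\tilde p_2)|^{-1}<\ep$. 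The target is then to choose $R_2$ so large that, for $|y_1|,|y_2|>R_2$, every one of the four evaluation points appearing on the left of~(\ref{eq:det}) lies in the region where this pointwise bound is valid, and then to upgrade the pointwise bound to the mixed inequality~(\ref{eq:det}).

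The boundary conditions in~(\ref{eq:sys}) give $u(y)$, $v(y)$, $|\nabla u(y)|$, $|\nabla v(y)|\to\infty$ as $|y|\to\infty$. Because $y_i\in\Sigma_\lambda^{U^-}\cap\Sigma_\lambda^{V^-}$ forces $u_\lambda(y_i)>u(y_i)$ and $v_\lambda(y_i)>v(y_i)$, the intermediate values $z_i,w_i$ exceed $u(y_i),v(y_i)$ respectively. Consequently each of the three components of the four evaluation points $(z_1,v(y_1),\nabla u(y_1))$, $(u_\lambda(y_1),w_1,\nabla u(y_1))$, $(u(y_2),w_2,\nabla v(y_2))$, $(z_2,v_\lambda(y_2),\nabla v(y_2))$ has magnitude at least $\min\{u(y_i),v(y_i),|\nabla u(y_i)|,|\nabla v(y_i)|\}$ for the appropriate $i$, and taking $R_2$ so large that this minimum exceeds $1/\ep$ on $\mathbf{R}^2\setminus B_{R_2}(0)$ places all four points in the region of the preceding paragraph.

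I expect the main obstacle to be that~(\ref{eq:det}) is \emph{not} the determinant of~(iv) at a single point: each of the four partial derivatives is evaluated at a possibly different point. I would close this gap by a compactness/contradiction argument. If~(\ref{eq:det}) failed for arbitrarily large $y_1,y_2$, one would obtain sequences $y_1^n,y_2^n$ with $|y_i^n|\to\infty$ along which the mixed expression stays $\le 0$. By~(ii) the factors $\partial g/\partial v$ and $\partial f/\partial u$ are uniformly bounded, while by~(iii) together with Lemma~\ref{lem:neg} the factors $\partial g/\partial u$ and $\partial f/\partial v$ are eventually bounded and strictly negative on the relevant region. Extracting a subsequence along which each of the four factors converges and exploiting the joint divergence of all their arguments, one applies the strong form of the limit in~(iv) to identify the limit of the mixed expression with the (strictly positive) limit appearing in~(iv), contradicting the non-positivity along the sequence. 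This is precisely the point at which the full strength of~(iv)---rather than~(ii) and~(iii) alone---enters the argument.
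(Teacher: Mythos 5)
Your proposal follows essentially the same route as the paper: both observe that on $\Sigma_\lambda^{U^-}\cap\Sigma_\lambda^{V^-}$ every argument appearing in (\ref{eq:det}) (the intermediate values $z_i,w_i$, the reflected values $u_\lambda(y_1),v_\lambda(y_2)$, and the gradients) diverges as $|y_i|\to\infty$, and then transfer the positivity of the single-point determinant limit in (iv) to the four-point expression. The only difference is packaging --- the paper bounds the four-point expression below by $\tfrac12$ of a single-point determinant and lets $R\to\infty$, while you argue by subsequences and contradiction --- and both versions lean on the same unstated point at the crux, namely that $\partial g/\partial v$ and $\partial f/\partial u$ (for which (ii) gives only boundedness, not convergence) can be evaluated at two \emph{different} diverging points without changing the limit of their product; your sketch is, if anything, more explicit than the paper about where this issue sits.
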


\begin{proof}
Since $f,g \in C^{1}(\mathbf{R}^2\times \mathbf{R}^{2}, \mathbf{R})$
and the fact that the solutions growing to infinity at infinity, for
all $R\geq 0$, $\forall \ y_1, y_2 \in \{\mathbf{R}^2 \backslash
B_{R}(0)\}\cap \Sigma_\lambda ^{U^-}\cap \Sigma_\lambda ^{V^-}$,
$\frac{\partial g}{\partial u}(z_1, v(y_1), \nabla u(y_1))$ and
$\frac{\partial g}{\partial v}(z_1, v(y_1), \nabla u(y_1))$ are
bounded. Using the continuity of $\frac{\partial f}{\partial u}$ and
$\frac{\partial f}{\partial v}$, we have
\begin{eqnarray*}
&   & \frac{\partial g}{\partial u}(z_1, v(y_1), \nabla
u(y_1))\frac{\partial f}{\partial v}(u(y_2),w_2, \nabla v(y_2)) \\
&   &  \nonumber \quad -\frac{\partial g}{\partial
v}(u_\lambda(y_1),w_1, \nabla u(y_1))\frac{\partial f}{\partial
u}(z_2, v_\lambda (y_2), \nabla v(y_2))\\
&   & \quad \geq \frac{1}{2}\{\frac{\partial g}{\partial u}(z_1,
v(y_1), \nabla
u(y_1))\frac{\partial f}{\partial v}(u(y_1),v(y_1), \nabla v(y_1)) \\
&   &  \nonumber \quad \quad -\frac{\partial g}{\partial v}(u
(y_1),v(y_1), \nabla u(y_1))\frac{\partial f}{\partial u}(u(y_1),
v(y_1), \nabla v(y_1))\}\\
&   & \quad \quad \to \lim_{|u|^{-1}+|v|^{-1}+|(p_1, p_2)|^{-1}\to 0 }\frac{1}{2}(\frac{\partial g}{\partial
u}\frac{\partial f}{\partial v}-\frac{\partial g}{\partial
v}\frac{\partial f}{\partial u})|_{(u,v,p_1,p_2)}>0\quad (as \
R\to +\infty),
\end{eqnarray*}
which proves the lemma.
\end{proof}

We now give our Theorem \ref{thm:sym}, which will be proved by the
use of the maximum principle and the method of moving planes as in
\cite{Bu00}.

\begin{proof}
We just have to show that for $\lambda=0$ there hold
$$
U_\lambda(x)\equiv V_\lambda (x)\equiv 0, \quad \forall x\in
\Sigma_\lambda.
$$
In order to show this, we will apply the moving planes methods in three steps. \\
\textbf{Step 1:} There exists $\lambda^*<0$ such that $U_\lambda
\leq 0$ and $V_\lambda \leq 0$ in $\Sigma_\lambda$, for all $\lambda
\leq \lambda^*$.

Fix $\lambda^*<-\max\{R_1, R_2\}$, then for all $\lambda<\lambda^*$,
we have $\forall x\in \Sigma_\lambda$, $|x|>\max\{R_1, R_2\}$.
Assume for contradiction that there is a $\lambda <\lambda^*$ and a
point $y_0 \in \Sigma_\lambda$, such that $U_\lambda(y_0)>0$.

Since $\lim_{|x|\to +\infty}U_\lambda (x)=0$ and $U_\lambda(x)\equiv
0$, $x\in T_\lambda=\partial \Sigma_\lambda$, we may take $y_1\in
\Sigma_\lambda$, such that
$$
U_\lambda(y_1)=\max_{y\in \bar{\Sigma}_\lambda}U_\lambda(y)>0.
$$

At point $y_1$, we have $(U_{\lambda, ij})(y_1)\leq 0$ and $\nabla
U_\lambda(y_1)=0$. Thus, (\ref{eq:bulam}) turns to be
\begin{equation}
\label{eq:ulam}
 \frac{\partial g}{\partial u}(\xi_1(y_1,\lambda), v(y_1),
\nabla u (y_1))U_{\lambda}(y_1)+\frac{\partial
  g}{\partial v}(u_{\lambda}(y_1), \eta_{1}(y_1, \lambda), \nabla u(y_1))V_{\lambda}(y_1)\geq
  0.
\end{equation}
Notice that $y_1 \in \{\mathbf{R}^2\backslash B_{R_1}(0)\}\cap
\Sigma_{\lambda}^{U^-}$, by using Lemma \ref{lem:neg}, we have
$$
\frac{\partial
  g}{\partial v}(u_{\lambda}(y_1), \eta_{1}(y_1, \lambda), \nabla
  u(y_1))V_{\lambda}(y_1)> 0.
$$
Combining this with the assumption (ii), we have $V_\lambda
(y_1)>0$, i.e. $y_1 \in \{\mathbf{R}^2 \backslash B_{R_2}(0)\}\cap
\Sigma_\lambda ^{U^-} \cap \Sigma_\lambda ^{V^-}$. On the other
hand, we also have
$$
 \lim_{|x|\to
+\infty}V_\lambda (x)=0 \quad \textrm{and}\quad  V_\lambda(x)=0\quad
\forall \ x \in T_\lambda,
$$
hence, we can take $y_2 \in \Sigma_\lambda$ such that
$$
V_\lambda (y_2)=\max_{y\in \bar{\Sigma}_\lambda }V_\lambda (y)>0.
$$
We can repeat the above argument for (\ref{eq:bvlam}) and show that
$U_\lambda (y_2)>0$ i.e. $y_2 \in \{\mathbf{R}^2 \backslash
B_{R_2}(0)\}\cap \Sigma_\lambda ^{U^-} \cap \Sigma_\lambda ^{V^-}$
and
\begin{equation}
\label{eq:vlam} \frac{\partial f}{\partial u}(\xi_2(y_2,\lambda),
v_\lambda(y_2), \nabla v (y_2))U_{\lambda}(y_2)+\frac{\partial
  f}{\partial v}(u(y_2), \eta_{2}(y_2, \lambda), \nabla v(y_2))V_{\lambda}(y_2)\geq
  0.
\end{equation}
Let us put
$$
J_{11}(\lambda)=\frac{\partial g}{\partial u}(\xi_1(y_1,\lambda),
v(y_1), \nabla u (y_1))<0, $$
$$
J_{12}(\lambda)=\frac{\partial g}{\partial v}(u_{\lambda}(y_1),
\eta_{1}(y_1, \lambda), \nabla u(y_1))> 0,
$$
$$
J_{21}(\lambda)=\frac{\partial f}{\partial u}(\xi_2(y_2,\lambda),
v_\lambda(y_2), \nabla v (y_2))> 0,$$
$$
J_{22}(\lambda)=\frac{\partial
  f}{\partial v}(u(y_2), \eta_{2}(y_2, \lambda), \nabla v(y_2))<0.
$$
By using (\ref{eq:ulam}) and (\ref{eq:vlam}), we obtain
\begin{eqnarray*}
U_\lambda(y_1) & \leq &
-\frac{J_{12}(\lambda)}{J_{11}(\lambda)}V_\lambda(y_1)\\
& \leq & -\frac{J_{12}(\lambda)}{J_{11}(\lambda)}V_\lambda(y_2)\\
& \leq & \frac{J_{12}(\lambda)J_{21}(\lambda)}{J_{11}(\lambda)J_{22}(\lambda)}U_\lambda(y_2)\\
& \leq & \frac{J_{12}(\lambda)J_{21}(\lambda)}{J_{11}(\lambda)J_{22}(\lambda)}U_\lambda(y_1),
\end{eqnarray*}
which implies
$$
J_{11}(\lambda)J_{22}(\lambda)-J_{12}(\lambda)J_{21}(\lambda)\leq 0.
$$
However, as we have shown $y_1, y_2 \in \{\mathbf{R}^2 \backslash
B_{R_2}(0)\}\cap \Sigma_\lambda ^{U^-} \cap \Sigma_\lambda ^{V^-}$,
by the result of Lemma \ref{lem:nneg},
$$
J_{11}(\lambda)J_{22}(\lambda)-J_{12}(\lambda)J_{21}(\lambda)>0,
$$
which leads to a contradiction. \textbf{Step 1 is completed.}\\

We now move the plane $T_\lambda$ toward right, i.e. increase the
value of $\lambda$, as long as both $U_\lambda (x)\leq 0$ and
$V_\lambda (x)\leq 0$ hold. Define:
\begin{equation}\label{def:lamb}
\lambda_0=\sup \{\lambda \in \mathbf{R} \ | \ \forall \mu \leq
\lambda, \ U_\mu \leq 0, V_\mu \leq 0, \forall x \in \Sigma_{\mu}
\}.
\end{equation}
Step 1 implies that $\lambda_0 >-\infty$. On the other hand, as
$u(x)>u(0)$, $\forall |x|>R$, for some sufficiently large $R$,
$\lambda_0<+\infty$. We want to show that $\lambda_0\geq 0$.
Assume not, i.e., $\lambda_0<0$.

Since all objects we consider are continuous with respect to
$\lambda$, we know that
$$U_{\lambda_0}\leq 0\quad  \textrm{and} \quad  V_{\lambda_0} \leq
0,\quad \textrm{in} \ \Sigma_{\lambda_0}.
$$
Then it follows from (\ref{eq:bulam}) (\ref{eq:bvlam}) and (ii) that
in $\Sigma_{\lambda_0}$:
\begin{eqnarray}
  \label{eq:u} \\\nonumber
  a_{ij}U_{\lambda_0,ij} &+&\frac{\partial g}{\partial p_1}(u_{\lambda_0}, v_{\lambda_0}, \theta_1(x,\lambda_0) ,
  u_{\lambda_0,2})U_{\lambda_0,1}\\
\nonumber  &+& \frac{\partial g}{\partial p_2}(u_{\lambda_0},
v_{\lambda_0},
  u_{\lambda_0,1}, \theta_{2}(x,\lambda_0))U_{\lambda_0,2} +\frac{\partial g}{\partial u}(\xi_1(x,\lambda_0), v,
\nabla u )U_{\lambda_0}\geq 0,
\end{eqnarray}
\begin{eqnarray}
  \label{eq:v}\\\nonumber
  b_{ij}V_{\lambda_0,ij}&+&\frac{\partial f}{\partial p_1}(u_{\lambda_0}, v_{\lambda_0}, \tau_1(x,\lambda_0) ,
  v_{\lambda_0,2})V_{\lambda_0,1}\\
\nonumber  &+&\frac{\partial f}{\partial p_2}(u_{\lambda_0},
v_{\lambda_0},
  v_{\lambda_0,1}, \tau_{2}(x,\lambda_0))V_{\lambda_0,2} +\frac{\partial
  f}{\partial v}(u, \eta_{2}(x, \lambda_0), \nabla v)V_{\lambda_0}\geq 0,
\end{eqnarray}
where $\theta_i, \tau_i, \eta_i$ are the same as in (\ref{eq:bulam})
and (\ref{eq:bvlam}).\\
\textbf{Step 2.} Either $U_{\lambda_0}\equiv 0$ or
$V_{\lambda_0}\equiv 0$ hold in $\Sigma_{\lambda_0}$.

Suppose $U_{\lambda_0}\nequiv 0$ and $V_{\lambda_0}\nequiv  0$, in
$\Sigma_{\lambda_0}$. Thus we can choose $R_3>\max\{R_1, R_2\}$ s.t.
$U_{\lambda_0}\nequiv 0$ and $V_{\lambda_0}\nequiv 0$ in
$\Sigma_{\lambda_0}\cap B_{R_3}(0)$.

The definition of $\lambda_0$ implies that there exist sequences
$\{\lambda_k\}_{k=1}^\infty \subset \mathbf{R}$ and
$\{y_k\}_{k=1}^\infty \subset \mathbf{R}^2$ such that
$\lambda_k>\lambda_0$, $\lim_{k\to \infty}\lambda_k=\lambda_0$,
$y_k\in \Sigma_{\lambda_k}$, and either $U_{\lambda_k}(y_k)>0$, or
$V_{\lambda_k}(y_k)>0$. Taking $U_{\lambda_k}(y_k)>0$ (up to a
subsequence) as an example, we can rename $y_k$ to be the maximum
points, i.e.
\begin{equation}\label{def:yk}
U_{\lambda_k}(y_k)=\max_{x\in \Sigma_{\lambda_k}}U_{\lambda_k}(x)>0,
\quad k =1,2,\dots.
\end{equation}
Then there are two cases could happen.\\
\textbf{Case 1.} The sequence $\{y_k\}_{k=1}^\infty$ contains a
bounded subsequence.

Without loss of generality, we assume $\{y_k\}_{k=1}^\infty\subset
\{\Sigma_{\lambda_0}\cap B_{R_3-1}(0)\}.$
$$
\lim_{k\to \infty}y_k=y_0, \quad  y_0\in
\cap_{k=1}^{+\infty}\Sigma_{\lambda_k}=\overline{\Sigma_{\lambda_0}}.
$$
Noticing that since $(a_{ij})$ is  strictly convex in
$\Sigma_{\lambda_0}\cap B_{R_3}(0)$, we can apply the Maximum
Principle and the Hopf's lemma to (\ref{eq:u}) in
$\{\Sigma_{\lambda_0}\cap B_{R_3}(0)\}$.

From (\ref{def:yk}), we have $U_{\lambda_k}(y_k)>0$, $ D^2 U_{\lambda_k}(y_k)\geq 0$, and
$\nabla U_{\lambda_k}(y_k)=0$, which imply
\begin{equation}\label{eq:uy0}
U_{\lambda_0}(y_0)\geq 0,\quad
D^2 U_{\lambda_0}(y_0)\geq 0, \quad \textrm{and} \quad \nabla
U_{\lambda_0}(y_0)=0,
\end{equation}
Since $U_{\lambda_0}(x)\leq 0$ $\forall x\in \Sigma_{\lambda_0}$, we have $U(y_0)=0$. Using the Maximum
Principle, we have either (a) $U_{\lambda_0}\equiv 0$ in
$\{\Sigma_{\lambda_0}\cap B_{R_3}(0)\}$, or (b) $U_{\lambda_0}< 0$
$\forall x\in \{\Sigma_{\lambda_0}\cap B_{R_3}(0)\}$.  Since we
haven chosen $R_3$ to be a large constant such that (a) does not
happen, (b) mush hold. Therefore, $y_0\in \partial
\{\Sigma_{\lambda_0}\cap B_{R_3}(0)\}$. By using Hopf's lemma, we
have $\frac{\partial U_{\lambda_0}}{\partial \nu}(y_0)>0$, ($\nu$ is
the outward normal vector on $\partial \{\Sigma_{\lambda_0}\cap
B_{R_3}(0)\}$), which contradicts the
second equation in (\ref{eq:uy0}).\\
\textbf{Case 2.} $\lim_{k\to \infty}|y_k|=+\infty$.

In this case, we can choose $k^*>0$ such that if $k>k^*$,
$|y_k|>\max\{R_1, R_2\}$, where $R_1, R_2$ as in Lemma \ref{lem:neg}
and Lemma \ref{lem:nneg}. For any fixed $k>k^*$, exactly as in Step
1, we can show $V_{\lambda_k}(y_k)>0$. After choosing
$$
V_{\lambda_k}(x_k)=\max_{y\in \Sigma_{\lambda_k}}V_{\lambda_k}(y)>0,
$$
we also have $U_{\lambda_k}(x_k)>0$. Therefore,
$$
x_k, y_k \in \{\mathbf{R}^2 \backslash B_{R_2}(0)\}\cap
\Sigma_{\lambda_k} ^{U^-}\cap \Sigma_{\lambda_k} ^{V^-}.
$$
Similarly defining
$$
J_{11}({\lambda_k})=\frac{\partial g}{\partial
u}(\xi_1(y_k,\lambda_k), v(y_k), \nabla u (y_k))<0,
$$
$$
J_{12}(\lambda_k)=\frac{\partial g}{\partial v}(u_{\lambda_k}(y_k),
\eta_{1}(y_k, \lambda_k), \nabla u(y_k))>0,
$$
$$
J_{21}(\lambda_k)=\frac{\partial f}{\partial
u}(\xi_2(x_k,\lambda_k), v_\lambda(x_k), \nabla v (x_k))>0,
$$
$$
J_{22}(\lambda_k)=\frac{\partial
  f}{\partial v}(u(x_k), \eta_{2}(x_k, \lambda_k), \nabla v(x_k))<0,
$$
same as in Step 1, we can show
$J_{11}(\lambda_k)J_{22}(\lambda_k)-J_{12}(\lambda_k)J_{21}(\lambda_k)\leq
0,$ which contradicts Lemma \ref{lem:nneg}.
\textbf{Step 2 is completed.}\\
\textbf{Step 3.} Both $U_{\lambda_0}\equiv 0$ and
$V_{\lambda_0}\equiv 0$ hold in $\Sigma_{\lambda_0}$.

From Step 2, we let $U_{\lambda_0}\equiv 0$  for example, i.e.
$u(x)$ is symmetry about $T_{\lambda_0}$. Putting $U_{\lambda_0}$
into (\ref{eq:bulam}) and using the assumption (ii), we obtain
$V_{\lambda_0}\equiv 0$. Assume that $\lambda_0\geq 0$. Doing the
same for planes from the right, we know that there is some
$\lambda_1$ such that both $U_{\lambda_1}\equiv 0$ and
$V_{\lambda_1}\equiv 0$ hold in
$\Sigma_{\lambda_1}^c$.\textbf{Step 3 is completed.}

\end{proof}
%\bibliographystyle{plain}
%\bibliography{bib}

\end{document}